\documentclass{amsart}     
\usepackage{amssymb}

\newcommand{\C}{\mathbb{C}}
\newcommand{\D}{\mathbb{D}}

\newcommand{\R}{\mathbb{R}}
\newcommand{\N}{\mathbb{N}}

\newcommand{\dom}{\mbox{dom}}
\newcommand{\ran}{\mbox{ran}}

\renewcommand{\Re}{\mbox{Re}}

\newtheorem{theorem}{Theorem}[section]
\newtheorem{lemma}[theorem]{Lemma}

\theoremstyle{definition}

\theoremstyle{theorem}

\theoremstyle{theorem}

\theoremstyle{theorem}

\theoremstyle{theorem}

\theoremstyle{definition}

\theoremstyle{theorem}

\numberwithin{equation}{section}

\begin{document}

\title{An effective Carath\'eodory Theorem}

\author{Timothy H. McNicholl}
\address{Department of Mathematics\\
              Lamar University\\
              Beaumont, Texas 77710 USA \\
               }
\email{timothy.h.mcnicholl@gmail.com}

\begin{abstract}
By means of the property of effective local connectivity, the computability of finding the Carath\'eodory extension of a conformal map of a 
Jordan domain onto the unit disk is demonstrated.
\end{abstract}
\keywords{Computable analysis, constructive analysis, complex analysis, conformal mapping, effective local connectivity
}
\subjclass[2010]{03F60, 30C20, 30C30, 30C85}

\maketitle

\section{Introduction}

In 1851, B. Riemann proved his powerful Riemann Mapping Theorem which states
that every simply connected subset of the plane is conformally equivalent to the 
unit disk, $\D$.  E. Bishop gave a constructive proof of this theorem in 1967 (now in \cite{Bishop.Bridges.1985}), and in 1999 P. Hertling proved a uniformly effective version of this theorem \cite{Hertling.1999}.

In classical complex analysis, the next step up from the Riemann Mapping Theorem is
the Carath\'eodory Theorem which states that a conformal map of a simply connected
Jordan domain onto the unit disk can be extended to a homeomorphism of the closure
of the domain with the closure of the disk.  Such an extension is called the 
{\it boundary extension} or {\it Carath\'eodory extension}.  This result is 
the basis for demonstrating the existence of solutions to Dirichlet problems on 
simply connected Jordan domains.

Here, we will extend Hertling's work by proving a uniformly effective version of the Carath\'eodory Theorem.  That is, roughly speaking, we show that from sufficiently good approximations to a 
parameterization of a Jordan curve $\gamma$, we can compute arbitrarily good approximations
of a conformal map of the interior of $\gamma$ onto the unit disk as well as arbitarily 
good approximations of its boundary extension.  This will be achieved by means of \it effective local connectivity\rm.
This concept, which first appeared in \cite{Miller.2004}, is the subject of a previous 
investigation by Daniel and McNicholl \cite{Daniel.McNicholl.2009} and also plays an important role in 
\cite{Brattka.2008}.  The resulting construction of the Carath\'eodory extension can be considered as a 
new proof of this result.  

The paper is organized as follows.  In Sections \ref{sec:BACK} and \ref{sec:COMP}, we summarize background
information from complex analysis and computable analysis.  In Section 
\ref{sec:MAIN}, we head straight for the proof of the main theorem.  

\section{Background from complex analysis}\label{sec:BACK}

Most of our terminology and notation from complex analysis can be found in \cite{Conway.1978} and \cite{Conway.1995}.

We will use $C[0,1]$ to denote the set of continuous functions from $[0,1]$ into 
$\C$.

When $X \subseteq \C$, define 
\[
D_\epsilon(X) = \bigcup_{p \in X} D_\epsilon(p).
\]
Let $\parallel\ \parallel_\infty$ denote the $L^\infty$ norm on $C[0,1]$.
Hence, if $\parallel f - g\parallel_\infty < \epsilon$, then $\ran(f) \subseteq D_\epsilon(\ran(g))$.

An $f \in C[0,1]$ is \emph{polygonal} if there is a partition of 
$[0,1]$, $0 = a_0 < a_1 < \ldots < a_{k-1} < a_k = 1$, such that $f$ is linear on 
each of $[a_j, a_{j+1}]$.  $f$ is \emph{rational polygonal} if each $a_j$ is a rational
number and each $f(a_j)$ is a rational point.  Such a function is completely determined
by the tuple $(a_0, f(a_0), \ldots, a_k, f(a_k))$.  

We say that an arc $A_2$ \emph{extends} an arc $A_1$ if $A_2 \supset A_1$ and 
$A_2, A_1$ have an endpoint in common.

A \emph{parameterization} of an arc $A$ is a homeomorphism of $[0,1]$ with $A$.  A parameterization of a Jordan curve $J$ is a homeomorphism of $\partial \D$ with
$J$.  We will follow the usual convention of identifying an arc or Jordan curve with any of its parameterizations.

A \emph{domain} is an open, connected subset of the plane.  A domain is \emph{Jordan} if it is bounded by Jordan curves.

A function $u$ on a domain $D$ is \emph{harmonic} if 
\[
\frac{\partial^2 u}{\partial x^2}(z) + \frac{\partial^2 u}{\partial y^2} (z) = 0
\]
for all $z \in D$.  

Suppose $D$ is a Jordan domain and that $f$ is a bounded piecewise continuous
function on its boundary.  The resulting \emph{Dirichlet problem} is to find a harmonic
function $u$ on $D$ with the property that 
\[
\lim_{z \rightarrow \zeta} u(z) = f(\zeta)
\]
for all $\zeta \in \partial D$ at which $f$ is continuous.  Solutions to Dirichlet problems
always exist and are unique.  The function $f$ is said to provide the \emph{boundary data} for this 
problem.

\section{Background from computable analysis and computability}\label{sec:COMP}

An informal summary of the fundamentals of Type-Two Effectivity appears in 
\cite{Daniel.McNicholl.2009}.  We identify $\C$ with $\R^2$.  Hence, we can also use the naming systems
in \cite{Daniel.McNicholl.2009} for $\C$ and its hyperspaces.  
We shall identify objects with their names wherever this results in simplicity of exposition while not 
creating misconceptions.  

While we identify arcs and Jordan curves with their parameterizations, such curves will always be named by names of their parameterizations.

We also refer the reader to \cite{Daniel.McNicholl.2009} for the definitions of \it local connectivity witness\rm\ and 
\it uniform local arcwise connectivity witness\rm\ and related theorems.  In addition to 
these theorems, we will need the following.

\begin{lemma}
From a name of a homeomorphism $f$ of $\partial \D$ with a Jordan curve $J$, we can compute a local connectivity witness for $J$.  
\end{lemma}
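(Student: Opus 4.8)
The plan is to transport the evident local connectivity of the circle $\partial\D$ across the homeomorphism $f$. The data a local connectivity witness must supply is, for each rational $\epsilon > 0$, a rational $\delta > 0$ such that any two points of $J$ lying within distance $\delta$ of one another are contained in a connected subset of $J$ of diameter less than $\epsilon$; I will compute the function $\epsilon \mapsto \delta$. The geometric idea is simple: if $z,w \in J$ are close, then their preimages $s = f^{-1}(z)$ and $t = f^{-1}(w)$ are close on $\partial\D$ by continuity of $f^{-1}$, so the minor arc $\alpha$ of $\partial\D$ joining $s$ to $t$ is short, and hence its image $f(\alpha)$ is a connected subarc of $J$ through $z$ and $w$ of small diameter by continuity of $f$. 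Making this effective requires two moduli of continuity, one for $f$ and one for $f^{-1}$.

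First I would extract from the given name of $f$ a modulus of uniform continuity $\omega_f$; this is routine, since a name of a continuous function on the (computably compact) domain $\partial\D$ permits the computation of such a modulus. The delicate part is a modulus of continuity for the inverse, i.e. computing, for each rational $\eta>0$, a rational $\delta>0$ such that $|f(s)-f(t)| < \delta$ forces $|s-t| < \eta$ for all $s,t \in \partial\D$. I would obtain this by considering the compact set $K_\eta = \{(s,t) \in \partial\D \times \partial\D : |s-t| \ge \eta\}$ together with the computable function $(s,t)\mapsto |f(s)-f(t)|$ on it. Because $f$ is injective, this function is strictly positive on $K_\eta$, so its minimum $m_\eta$ over the computably compact set $K_\eta$ is a positive computable real; any positive rational $\delta < m_\eta$ then yields the required implication by contraposition.

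With both moduli in hand, the witness is assembled as follows: given $\epsilon$, first choose a rational $\eta \in (0,1)$ so small that $\omega_f$ guarantees $\diam(f(\alpha)) < \epsilon$ whenever $\alpha \subseteq \partial\D$ has diameter at most $\eta$, and then let $\delta$ be the inverse modulus produced above for this $\eta$. For any $z,w\in J$ with $|z-w|<\delta$, the preimages satisfy $|f^{-1}(z)-f^{-1}(w)| < \eta < 1$, so the minor arc $\alpha$ joining them is unambiguous, with diameter equal to the chord $|f^{-1}(z)-f^{-1}(w)| < \eta$; its image $f(\alpha) \subseteq J$ is then connected, contains both $z$ and $w$, and has diameter below $\epsilon$. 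Note that the algorithm itself never evaluates $f^{-1}$: the points $s,t$ and the arc $\alpha$ enter only into the verification, so only the two moduli are actually computed.

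The main obstacle is the second step, the inverse modulus. Its correctness rests on two facts that must be combined carefully: the topological fact that $f$, being a homeomorphism, is injective, whence $m_\eta > 0$; and the computability fact that the minimum of a computable function over a computably compact set is a computable real. Positivity is precisely what makes the search for a rational lower bound $\delta < m_\eta$ terminate, and it is here that the hypothesis that $f$ is a homeomorphism, rather than merely a continuous surjection, is genuinely used.
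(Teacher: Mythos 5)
Your proof is correct, and its skeleton is the paper's: both arguments transport the evident local connectivity of $\partial \D$ through $f$, using a modulus of continuity for $f$ together with a modulus for $f^{-1}$, and both take the connected set to be the image under $f$ of a short arc of the circle. The genuine difference is the middle step. The paper computes the function $f^{-1}$ itself (invoking the computability of inverses of computable homeomorphisms on compact sets) and then extracts a modulus of continuity for it, finally composing the two moduli, $h = m_1 \circ m$. You never compute $f^{-1}$: you derive the inverse modulus directly from injectivity, by observing that the minimum $m_\eta$ of $(s,t) \mapsto |f(s)-f(t)|$ over $K_\eta = \{(s,t) \in \partial\D \times \partial\D : |s-t| \geq \eta\}$ is positive and that a rational $\delta < m_\eta$ can be found by search. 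Your route is more self-contained, since it avoids citing the inverse-function result, at the cost of redoing in this special case the compactness argument that result encapsulates; the paper's route is shorter and reuses machinery it needs elsewhere anyway (it computes inverses again later, via Corollary 6.3.5 of Weihrauch). Two small points. First, what your search actually uses, and all it needs, is that $m_\eta > \delta$ is semi-decidable ($K_\eta$ is co-c.e. compact, so lower bounds on the minimum can be verified); calling $m_\eta$ a \emph{computable} real is a slight overstatement, as that would also require enumerating points of $K_\eta$, but this does not affect the argument. Second, you verify the pair-based form of the witness (any two $\delta$-close points of $J$ lie in a connected subset of diameter less than $\epsilon$), whereas the paper, following the cited definition, verifies the point-based form $J \cap D_{2^{-h(k)}}(w_1) \subseteq C \subseteq D_{2^{-k}}(w_1)$; these are equivalent up to routine constant adjustments, and indeed your construction already yields the point-based form if you take $C$ to be the image of the whole arc $\{s \in \partial\D : |s - f^{-1}(w_1)| \leq \eta\}$, which is exactly the paper's choice.
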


\begin{proof}
It follows by essentially the same argument as in the proof of Theorem 6.2.7 of \cite{Weihrauch.2000} that we can compute, uniformly in the given data, a modulus of continuity for $f$, $m$.
  Compute $f^{-1}$ and a 
modulus of continuity for $f^{-1}$, $m_1$.  Let $h = m_1 \circ m$.

We claim that $h$ is a local connectivity witness for $J$.  
For, let $k \in \N$, and let $w_1 \in J$.  Let $z_1$ be the unique preimage of 
$w_1$ under $f$.  Let $I$ be the interval $(z_1 - 2^{-m(k)}, z_1 + 2^{-m(k)})$.
Let $C = f[I]$.  
We claim that 
\[
J \cap D_{2^{-h(k)}}(w_1) \subseteq C \subseteq D_{2^{-k}}(w_1).
\]
For, let $w_2 \in C$.  Let $z_2$ be the unique preimage of $w_2$ under $f$.
Then, $z_2 \in I$, and so $|z_1 - z_2| < 2^{-m(k)}$.  Hence, $|w_2 - w_1| < 2^{-k}$.
Thus, $C \subseteq S_{2^{-k}}(w_1)$.  

Now, suppose $w_2 \in J \cap D_{2^{-h(k)}}(w_1)$.  Again, let $z_2$ be the unique preimage of $w_2$ under $f$.  Then, $|z_1 - z_2| < 2^{-m(k)}$.  
Hence, $z_2 \in I$.  Thus, $w_2 \in C$.  
\end{proof}

\begin{lemma}\label{lm:DIRICHLET.DISK}
Given names of arcs $\gamma_1, \ldots, \gamma_n$ such that $\partial \D = \gamma_1 + \ldots + \gamma_n$,
and given names of continuous real-valued functions $f_1, \ldots, f_n$ such that $\gamma_j = \dom(f_j)$, we can compute a name of the harmonic
function $u$ on $\D$ defined by the boundary data
\[
f(\zeta) = \left\{ \begin{array}{cc}
					f_j(\zeta) & \zeta \in \gamma_j, \zeta \neq \gamma_j(0), \gamma_j(1)\\
					\max_j \max f_j & \mbox{otherwise}.\\
					\end{array}
					\right.
\]
In addition we can compute the extension of $u$ to $\overline{\D}$ except
at the endpoints of the arcs $\gamma_1, \ldots, \gamma_n$.
\end{lemma}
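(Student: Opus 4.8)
The plan is to represent the solution by the Poisson integral and then to make both the interior computation and the boundary convergence effective. Throughout I write $P_r(\phi) = \frac{1 - r^2}{1 - 2r\cos\phi + r^2}$ for the Poisson kernel, and I recall the classical fact that the solution to the Dirichlet problem on $\D$ with boundary data $f$ is
\[
u(re^{i\theta}) = \frac{1}{2\pi}\int_0^{2\pi} P_r(\theta - t)\, f(e^{it})\, dt .
\]
Since the value of the integrand at the finitely many endpoints $\gamma_j(0), \gamma_j(1)$ does not affect the integral, I may replace $f$ by $f_j$ on each arc $\gamma_j$ and split the integral into a finite sum of integrals over the arcs, each of which involves only a given continuous function $f_j$.

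First I would compute $u$ on the open disk. Fix $z = re^{i\theta}$ with $r < 1$. The Poisson kernel $P_r(\theta - t)$ is a computable function of $(z, t)$ on $\D \times \R$, and each $f_j$ is computable on its compact domain. Hence each summand $\frac{1}{2\pi}\int_{\gamma_j} P_r(\theta - t)\, f_j(e^{it})\, dt$ is the integral of a computable function over a computable domain, and is therefore computable uniformly in $z$ by the standard results on effective integration of computable functions. Summing over $j$ yields a name for $u$ on $\D$, and the smoothness of $P_r$ on any compact subset of $\D$ supplies the requisite modulus of continuity there.

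Next I would treat the boundary extension. Fix a target $\zeta_0 = e^{i\theta_0}$ in the interior of some arc $\gamma_j$ and at distance at least $\delta$ from every endpoint. Using $\frac{1}{2\pi}\int_0^{2\pi} P_r = 1$, I write
\[
u(z) - f_j(\zeta_0) = \frac{1}{2\pi}\int_0^{2\pi} P_r(\theta - t)\,\bigl[f(e^{it}) - f_j(\zeta_0)\bigr]\, dt
\]
and split the integral into the part where $e^{it}$ is near $\zeta_0$ and the part where it is far. On the near part, the separation hypothesis guarantees $e^{it} \in \gamma_j$, so $|f(e^{it}) - f_j(\zeta_0)|$ is controlled by the modulus of continuity of $f_j$, which is computable from its name. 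On the far part, $P_r$ is bounded by an effectively computable quantity tending to $0$ as $r \to 1$, while $|f(e^{it}) - f_j(\zeta_0)|$ is bounded by twice an effective upper bound for $\max_j \max f_j$. Choosing the cutoff and the radius appropriately yields an effective modulus for the convergence $u(z) \to f_j(\zeta_0)$ as $z \to \zeta_0$, uniform over all $\zeta_0$ at distance at least $\delta$ from the endpoints, and hence a name for the extension of $u$ to $\overline{\D}$ away from any fixed neighborhood of the endpoint set.

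The hard part will be this third step: producing the effective, uniform modulus of continuity for the boundary extension. The delicate point is to balance the near/far split, choosing the cutoff radius and the approach parameter $r$ as effective functions of the target accuracy, the separation $\delta$, and the moduli of continuity of the $f_j$, so that both pieces are simultaneously small. The quantity $\max_j \max f_j$ need not be exactly computable, but an effective upper bound suffices, which is all the estimate requires.
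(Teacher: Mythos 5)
Your proposal is correct and follows essentially the same route as the paper's proof: compute $u$ on $\D$ via the Poisson integral using effective integration, then effectivize the classical boundary-convergence argument by a near/far split, with the near part controlled by the (computable) modulus of continuity of $f_j$, the far part by the effective decay of the Poisson kernel, and a rational bound $M$ standing in for the possibly non-computable $\max_j \max f_j$ --- all uniformly in the accuracy $\epsilon$ and the separation $\delta$, which is exactly the paper's computation of $\delta_{\alpha,\epsilon}$ and $\rho_{\alpha,\epsilon}$ for its regions $S(\rho_{\alpha,\epsilon},\delta_{\alpha,\epsilon},\alpha)$. The only point the paper spells out that you leave implicit is the algorithmic combination of the two modes: since one cannot decide from a name of $z\in\overline{\D}$ whether $z\in\partial\D$, one dovetails, scanning the name of $z$ for either a rational rectangle with closure in $\D$ (then use the Poisson formula) or a rectangle contained in one of the enumerated convergence regions about a rational boundary point $e^{i\alpha}$ (then output the estimate $f_j(e^{i\alpha})\pm\epsilon$); your uniform effective modulus is precisely what makes this scan produce a valid name, so this is a routine finishing step rather than a gap.
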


\begin{proof}
Let $u$ be the solution to the resulting Dirichlet problem on $\D$.  For $z \in \D$, we use the Poisson Integral Formula 
\begin{eqnarray}
u(z) = \frac{1}{2\pi}\int_0^{2\pi} u(e^{i\theta}) \frac{1 - |z|^2}{|e^{i\theta} - z|^2} d\theta\ z \in \D.\label{eqn:DIRICHLET.1}
\end{eqnarray}
(See, for example, Theorem I.1.3 of \cite{Garnett.Marshall.2005}.)
In the case under consideration, we have
\[
u(z) = \sum_j \frac{1}{2\pi} \int_{\gamma_j} f_j(\zeta) \frac{1 - |z|^2}{|\zeta - z|^2}|d\zeta|.
\]
Since integration is a computable operator, this shows we can compute $u$ on
$\D$.  

Since we are given $f_1, \ldots, f_n$, it might seem immediate that we can now compute the extension
of $u$ to $\overline{\D}$ except at the endpoints of $\gamma_1, \ldots, \gamma_n$.  However, it is not possible to determine from a name of a point $z \in \overline{\D}$ if $z \in \partial \D$.  
To see what the difficulty is, and to lead the way towards its solution, we delve
a little more deeply into the formalism.  Suppose we are given a name of a $z \in \overline{\D}$, $p$.  As we read $p$, it may be that at some point we find a subbasic neighborhood $R$ whose closure is contained in $\D$.  In this case, we can just use equation (\ref{eqn:DIRICHLET.1}).  However, if we 
keep finding subbasic neighborhoods that intersect $\partial \D$, 
then at some point we must commit to an estimate of $u(z)$.  If we
guess $z \in \partial \D$, then later this guess and this resulting estimate may
turn out to be incorrect.  We face a similar problem if we guess $z \in \D$.  The 
heart of the matter then is to estimate the value of $u(\zeta)$ when $\zeta$ is
near $z$ and in $\D$.  This can be done by effectivizing one of the usual proofs 
that $\lim_{\zeta \rightarrow z} u(\zeta) = f(z)$ when $z$ is between the endpoints of 
a $\gamma_j$.  To begin, fix rational numbers $\alpha \in [-\pi, \pi]$, 
$2\pi > \delta > 0$, and $0 < \rho < 1$.  Let  
\[
S(\rho, \delta, \alpha) =_{df} \{re^{i\theta}\ |\ \rho < r \leq 1\ \wedge\ |\theta - \alpha| < \delta/2\}.
\]
We now write the solution to the Dirichlet problem on the disk in a slightly different way
that considered previously in this proof.  To this end, 
let $P_r(\theta)$ be the \emph{Poisson kernel}, 
\[
Re\left(\frac{1 + re^{i\theta}}{1 - re^{i\theta}}\right).
\]
It is fairly well-known that if $\zeta \in \partial \D$ and $z \in \D$, then 
\[
\frac{1 - |z|^2}{|\zeta - z|^2} = \Re\left( \frac{\zeta + z}{\zeta - z} \right).
\]
At the same time, if $z = re^{i \theta}$ and $\zeta = e^{i \theta'}$, then 
\[
Re\left(\frac{\zeta + z}{\zeta - z} \right) = P_r(\theta - \theta').
\]
Let $f$ be a function on $\partial \D$ such that for each arc $\gamma_j$ 
$f(\zeta) = f_j(\zeta)$ whenever $\zeta$ is a point in $\gamma_j$ besides one
of its endpoints.  It then follows that when $0 < r < 1$,  
\[
u(re^{i \theta_1}) = \frac{1}{2\pi} \int_{-\pi}^\pi f(e^{i\theta}) P(\theta_1 - \theta) d\theta.
\]
We can compute a rational number $M$ such that 
\[
M > \max_k \max \ran(f_k).
\]
Suppose $\alpha$ is such that for some $j$, $e^{i \alpha}$ is in $\gamma_j$ but is not an endpoint.  From the given data, we can enumerate all such $\alpha, j$.  We cycle through all such $\alpha, j$ as we scan the name of $z$.  Fix a rational number $\epsilon > 0$.  From $\epsilon$ and the given data, one can compute a rational $\delta_{\alpha,\epsilon} > 0$ such that the arc
\[
\{e^{i \theta}\ :\ |\theta - \alpha| \leq \delta_{\alpha,\epsilon}\}
\]
is contained in $\ran(\gamma_j)$ and such that
\begin{eqnarray*}
\frac{\epsilon}{3} & > & \max\{|f_j(e^{i\theta}) - f_j(e^{i\alpha})|\ :\ |\theta - \alpha| \leq \delta_{\alpha,\epsilon}\}.
\end{eqnarray*}
We claim we can then compute $\rho_{\alpha, \epsilon}$ such that
\begin{eqnarray*}
\frac{\epsilon}{3M} & > & \max\{P_r(\theta)\ :\ |\theta| \geq \frac{1}{2}\delta\ \wedge\ \rho_{\alpha, \epsilon} \leq r \leq 1\}.
\end{eqnarray*}
We postpone the computation of $\rho_{\alpha, \epsilon}$ so that we can reveal our intent.  Namely, we claim that $|u(\zeta) - f(e^{i\alpha})| \leq \epsilon$ when $\zeta \in S(\rho_{\alpha, \epsilon}, \delta_{\alpha, \epsilon}, \alpha)$.
For, let $\zeta \in S(\rho_{\alpha, \epsilon}, \delta_{\alpha, \epsilon}, \alpha)$, and 
write $\zeta$ as $re^{i \theta_1}$.  Hence, $\rho < r \leq 1$, and we can 
choose $\theta_1$ so that $|\theta_1 - \alpha | < \delta_{\alpha, \epsilon} /2$.
If $r =1$, then there is nothing more to do.  So, suppose $r < 1$.
For convenience, abbreviate $\delta_{\alpha, \epsilon}$ by $\delta$.  
It then follows that 
\begin{eqnarray*}
|u(\zeta) - f(e^{i \alpha})| & \leq & \frac{1}{2\pi} \int_{|\theta - \alpha| \geq \delta} |f(e^{i \theta}) - f(e^{i \alpha})| P(\theta_1 - \theta) d\theta\\
 & & + \frac{1}{2\pi} \int_{|\theta - \alpha| < \delta}  |f(e^{i \theta}) - f(e^{i \alpha})| P(\theta_1 - \theta) d\theta.
\end{eqnarray*}
Suppose $|\theta - \alpha| \geq \delta$.  Since $|\theta_1 - \alpha| < \delta/2$, 
$|\theta_1 - \theta| \geq \delta/2$ and so 
$P_r(\theta_1 - \theta) < \epsilon /3M$.  Hence, the first term in the preceding sum
is at most $2\epsilon /3$.  At the same time, by our choice of $\delta_{\alpha, \epsilon}$, 
it follows that the second term is no larger than 
\[
\frac{1}{2\pi} \int_{-\pi}^\pi \frac{\epsilon}{3} P_r(\theta_1 - \theta) d\theta \leq \frac{\epsilon}{3}.
\]
Hence, $|u(\zeta) - f(e^{i \alpha})| \leq \epsilon$.

Hence, as we scan the name of $z$, if we encounter a rational rectangle $R$ and $\epsilon, \alpha$ such that $S(\rho_{\alpha, \epsilon}, \delta_{\alpha, \epsilon})$ contains $R$, then we can list any subbasic neighborhood that contains $[f(e^{i \alpha}) - \epsilon, f(e^{i\alpha}) + \epsilon]$.  It follows that if we
only read neighborhoods that intersect the boundary, then we will write a name 
of $f(z)$ on the output tape.

We conclude by showing how to compute $\rho_{\alpha, \epsilon}$.  Let $\delta' = \frac{1}{2} \delta$.  The key inequality is
\[
P_r(\theta) \leq P_r(\delta')
\]
when $\delta' \leq |\theta| \leq \pi$.  This is justified by Proposition 2.3.(c) on page 257 of \cite{Conway.1978}.  Since $e^{i \delta'} \neq 1$, 
$P_1(\delta')$ is defined, and in fact is $0$.  We can compute $P_r(\delta')$ as a 
function of $r$ on $[0,1]$.  We can thus compute 
$\rho_{\alpha, \epsilon}$ as required.
\end{proof}

\section{Proof of the main theorem}\label{sec:MAIN}

\begin{theorem}[\bf Effective Carath\'eodory Theorem]\label{thm:CARATH}
From a name of a parameterization $f$ of a Jordan curve $J$, and a name of a conformal map $\phi$ of the interior of $J$ onto $\D$, it is possible to 
compute a name of the Carath\'eodory extension of $\phi$.
\end{theorem}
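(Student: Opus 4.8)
The plan is to produce $\overline{\phi}$ by combining the given interior map $\phi$ with a \emph{computable modulus of continuity for $\overline{\phi}$ as a map of $\overline{\Int(J)}$ into $\overline{\D}$}; once such a modulus $m$ is in hand, a name of $\overline{\phi}$ is obtained as follows. Using the parameterization $f$ we can enumerate the rational points of $\Omega := \Int(J)$ together with a certificate of membership (the winding number of $J$ about a point bounded away from $J$ is computable and distinguishes interior from exterior), and since every $z \in \overline{\Omega}$ is a limit of such points, from a name of $z$ and a target precision $2^{-n}$ we can search out a rational $z' \in \Omega$ with $|z - z'| < 2^{-m(n+1)}$ and output an approximation to $\phi(z')$. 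Since $\overline{\phi}(z') = \phi(z')$ and $|\overline{\phi}(z') - \overline{\phi}(z)| < 2^{-(n+1)}$ by the modulus, this yields a name of $\overline{\phi}(z)$, uniformly in $z$. Note that this single procedure is correct whether $z$ lies in $\Omega$ or on $J$, which neatly sidesteps the undecidability of membership in $\partial \D$ that complicated Lemma \ref{lm:DIRICHLET.DISK}; that the resulting continuous function is the Carath\'eodory extension then follows from the classical theorem together with the uniqueness of continuous extensions.

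Thus everything reduces to computing the modulus $m$, and this is the crux. First, from $f$ we compute a local connectivity witness $h$ for $J$ by the first Lemma of Section \ref{sec:COMP}, a bound $R$ on $\sup_\zeta |f(\zeta)|$, and, by the Cauchy integral formula, the derivative $\phi'$ on $\Omega$. The estimate is an effective form of the length--area argument underlying the classical proof that local connectivity forces a continuous boundary extension. Fixing $w_0 \in J$ and writing $\sigma_\rho$ for the intersection of the circle $\{z : |z - w_0| = \rho\}$ with $\Omega$, Cauchy--Schwarz together with $\mathrm{length}(\sigma_\rho) \leq 2\pi\rho$ gives
\[
\int_{\rho_1}^{\rho_2} \frac{I(\rho)^2}{\rho}\, d\rho \;\leq\; 2\pi \iint_{\Omega \cap \{\rho_1 < |z - w_0| < \rho_2\}} |\phi'|^2\, dA \;\leq\; 2\pi\cdot \mathrm{Area}(\D) = 2\pi^2,
\]
where $I(\rho)$ denotes the length of $\phi(\sigma_\rho)$. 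Since the right-hand side is a fixed constant, on any dyadic range of radii the average of $I(\rho)^2/\rho$ is small, so a radius $\rho$ with $I(\rho)$ below a prescribed threshold must occur.

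For such a good radius $\rho$, the relevant component of $\phi(\sigma_\rho)$ is a crosscut of $\D$ of small length, hence of small diameter, and a crosscut of the disk of small diameter cuts off a region of correspondingly small diameter (its two endpoints on $\partial \D$ lie within $I(\rho)$ of each other, so the shorter boundary arc they determine, and the region it bounds together with the crosscut, are small). Consequently $\phi$ maps the piece of $\Omega$ cut off near $w_0$ into a disk of radius controlled by $I(\rho)$. Here is precisely where the witness $h$ enters: it guarantees that for small $\rho$ the arc of $J$ inside $D_\rho(w_0)$ lies in a connected subset of $D_{2^{-k}}(w_0)$, which both singles out the correct component of $\sigma_\rho$ and certifies that the cut-off piece of $\Omega$ contains $\Omega \cap D_{\rho'}(w_0)$ for a $\rho'$ computable from $h$. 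Tracking the dependence of the thresholds on the target $2^{-k}$, and using that $h$, $R$, and the area bound are all uniform in $w_0$, assembles these local estimates into the desired uniform modulus $m$. The main obstacle is exactly this passage from the averaged, existential length--area inequality to a \emph{computable} selection of a good radius with a \emph{certified} bound on the diameter of the cut-off image: because $\phi'$ may grow near $J$, the quantity $I(\rho)$ is an improper integral, and one must estimate the relevant image diameter by working out to radius $1-\eta$ in the target and controlling the remaining tail using $h$ and the area bound, all while invoking $h$ to identify which component of $\sigma_\rho$ governs the neighborhood of $w_0$. Converting the purely classical existence statement into this certified choice is what makes the estimate effective.
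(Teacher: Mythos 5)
Your overall architecture is sound and is genuinely different from the paper's. The paper never computes a modulus of continuity for $\overline{\phi}$ up front; instead it computes the boundary values pointwise via Type Conversion, building for each $\zeta_0 \in J$ an approach arc $Q \subseteq \overline{D}$ with $Q \cap J = \{\zeta_0\}$ out of rational polygonal pieces, estimating $|\phi(w_1) - \phi(\zeta_0)|$ for $w_1$ on $Q$ by the same length--area trick you invoke, and then recovering the map on all of $\overline{D}$ by a detour: compute $\phi^{-1}$ on $\partial \D$ by zero-finding, extend $\phi^{-1}$ to $\overline{\D}$ by the effective Poisson-integral Lemma \ref{lm:DIRICHLET.DISK}, and invert once more by zero-finding. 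Your route, if completed, buys a real simplification: once a computable modulus $m$ for $\overline{\phi}$ on $\overline{\Int(J)}$ is in hand, evaluation at nearby certified rational interior points gives the name directly, and Lemma \ref{lm:DIRICHLET.DISK}, both zero-finding steps, and the arc construction all become unnecessary. That reduction (density and enumerability of certified rational interior points, plus the modulus) is correct as you state it.

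The crux, computing $m$, has a genuine gap, and it sits exactly at the step you dispatch in one clause: that $h$ ``certifies that the cut-off piece of $\Omega$ contains $\Omega \cap D_{\rho'}(w_0)$ for a $\rho'$ computable from $h$.'' This is not a formal consequence of the definition of the witness; it is the assertion that all points of $\Omega$ sufficiently near $w_0$ lie in a \emph{single} component of $\Omega \cap D_\epsilon(w_0)$ of controlled size, i.e., that local connectivity of the curve $J$ transfers quantitatively to local connectivity of the domain at its boundary. That transfer is precisely the content of the unlabeled lemma inside the paper's proof (the one attributed to D.~Daniel), whose proof is a non-obvious contradiction argument using minimal-diameter subarcs of $J$; without it or an equivalent, your small crosscut image cuts off a small piece of $\D$, but nothing says a full metric neighborhood $\Omega \cap D_{\rho'}(w_0)$ maps into that piece. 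Relatedly, a circle centered at $w_0 \in J$ meets $\Omega$ in countably many crosscuts, not one, so you also need the (true, but unstated) structure fact that every component of $\D$ minus the image crosscuts, other than the component containing $\phi(z_0)$, lies in the small side of a single image crosscut; your singular ``the relevant component'' glosses this.

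Conversely, you have misplaced what you call the main obstacle. No computable selection of a good radius, and no certified bound along the way, is needed: $m(k)$ can be written down as an explicit formula in $h$, $k$, and $d(z_0, J)$ (choose $t$ so that $\log\bigl(\rho_2 / 2^{-t}\bigr)$ exceeds the length--area threshold for target $2^{-k}$, with $\rho_2 < d(z_0,J)$, and set $m(k) = h(t) + O(1)$), and the \emph{correctness proof} of this integer-valued function is allowed to be entirely classical. It may freely use the mere existence of a radius $\rho$ with $I(\rho)$ small and finite (a.e.-finiteness via Fubini handles the improper integral), and even the classical Carath\'eodory theorem itself, since only $m$, not the radius, enters the algorithm. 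So the ``certified choice'' and tail estimates you anticipate as the hard part dissolve, while the step you treated as routine --- the component lemma --- is where the real work of the proof lies. This is the same division of labor as in the paper, where the algorithm only evaluates $\phi$ at computed interior points and all length--area reasoning appears in the verification.
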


\begin{proof}
It follows from the main theorem of \cite{Gordon.Julian.Mines.Richman.1975} that 
we can compute, uniformly from the given data, a name of the interior of $J$.  It then follows that
from the given data, we can compute a rational point $z_0$ in the interior of $J$.  We can also compute
a uniform local arcwise connectivity witness for $J$, $h$.  We can assume $h$ is increasing.
Finally, from the given data, we can uniformly compute a sequence of rational polygonal
Jordan curves $\{P_t\}_{t \in \N}$ such that 
$\parallel P_t - J \parallel_\infty < 2^{-t}$.  See, for example, Lemma 6.1.10 of \cite{Weihrauch.2000}.  Hence, 
$J \subseteq D_{2^{-t}}(P_t)$.  Let $D$ denote the interior of $J$.

Let us allow $\phi$ to denote the Carath\'eodory extension of $\phi$.  
The outline of our proof is as follows.  We first compute the restriction of $\phi$ to 
the boundary of $D$.  It then follows that we can compute $\phi^{-1}$ on $\partial \D$.  It then 
follows by applying Lemma \ref{lm:DIRICHLET.DISK} to the real and imaginary parts of $\phi^{-1}$ that we can compute
$\phi^{-1}$ on the closure of $\D$ and hence $\phi$ on the closure of $D$.  

To compute $\phi$ on the boundary of $D$, we appeal to the Principle of Type Conversion.   Namely, we suppose are additionally
given a name of a $\zeta_0 \in J$ and compute $\phi(\zeta_0)$.  The key to this is to compute an arc $Q$ from $z_0$ to $\zeta_0$ such that $Q \subseteq \overline{D}$ and $Q \cap J = \{\zeta_0\}$.  
We do this by computing a sequence of rational polygonal arcs 
$\{Q_t\}_{t \in \N}$ as follows.

To compute $Q_0$, we first compute a rational point $e_0 \neq z_0$ in the interior of $J$ whose distance from $\zeta_0$ is less than $2^{-h(0)}$.  Now, $D$ is an 
open connected set.  It follows that there is a rational polygonal arc from 
$z_0$ to $e_0$ which is contained in the interior of $J$.   Such an arc can now 
be discovered through a search procedure.

We now describe how we compute $Q_{t+1}$.  This is to be a rational polygonal 
arc with the following properties. 
\renewcommand{\theenumi}{(\arabic{enumi})}
\begin{enumerate}
	\item $Q_{t+1}$ is contained in the interior of $J$.\label{prop1}
	
	\item $Q_{t+1}$ extends $Q_t$ and has $z_0$ as a common endpoint.\label{prop2}
	
	\item The other endpoint of $Q_{t+1}$, $e_{t+1}$, is such that 
	$|e_{t+1} - \zeta_0| < 2^{-h(t+1) - 1}$.
	
	\item The points in $Q_{t+1}$ that are not in $Q_t$ are contained in 
	the disk of radius $2^{-t + 1}$ about $e_t$.\label{prop4}
	
\end{enumerate}

By way of induction, suppose $Q_t$ is a rational polygonal arc from 
$z_0$ to a point labelled $e_t$ and that 
$|e_t - \zeta_0| < 2^{-h(t)-1}$.  Suppose also that $Q_t$ is contained in the interior of $J$.
We first compute a rational point $e_{t+1}$ in the interior of $J$ and an integer $r_{t+1}$ such that 
$|e_{t+1} - \zeta_0| < 2^{-h(t+1) - 1}$, 
$e_{t+1} \in \C - \overline{D_{2^{-r_{t+1}}}(P_{r_{t+1}})}$, 
 $|e_{t+1} - e_t| < 2^{-h(t)-1}$, and $e_{t+1} \not \in Q_t$.
 
We summarize the key claim at this point in the proof by the following Lemma.

\begin{lemma}
$e_{t+1}$ and $e_t$ are in the same connected component
 of 
 \[
D \cap D_{2^{-t} + 2^{-h(t)-1}}(e_t).
 \]
 \end{lemma}
 
 \begin{proof}
 For convenience, let $\epsilon = 2^{-t}$,  
 $\delta = 2^{-h(t)}$, and $B = D_{\epsilon + \delta/2}(e_t)$.\footnote{I wish to express here my gratitude to my colleague Dr. Dale Daniel for allowing me to use this proof, which is entirely of his own creation, in this paper.}
 
 By way of contradiction, suppose $e_{t+1}$ and $e_t$ are not in the 
 same connected component of $D \cap B$.
 Let $E$ and $E'$ be the distinct components that 
 contain $e_t$ and $e_{t+1}$ respectively.  Since $B$ is convex,
 there is a line segment $l$ having $e_t$ and $e_{t+1}$ as endpoints
 and such that $l \subset B$.  The length of $l$ is of course 
 less that $\delta/2$.  
Let $P = J \cap l \cap \partial E$, and let 
$P' = J \cap I \cap \partial E'$.  Each of $P$ and $P'$ is closed and 
therefore compact as a subset of the plane.  Therefore, 
there exists $p \in P$ and $p' \in P'$ such that 
$|p - p'| = d(P, P')$.  Hence, 
$d(P, P') < \delta /2$.  

For points, $a$ and $b$ of $J$, let $l(a,b)$ denote the 
minimum diameter of an arc in $J$ having $a$ and $b$ as endpoints.
Hence, $|a - b| \leq l(a,b)$.  

Now, let $A$ denote an arc in $J$ such that the endpoints of 
$A$ are $p$ and $p'$ and so that the diameter of $A$ is 
$l(p,p')$.  Note that $l(p, p') < \epsilon$.
A straightforward connectivity argument shows that $A$ 
intersects $\partial B$.  (If not, then there is a 
piecewise linear arc with one endpoint in $A$ and the other in 
$E'$- a contradiction.)

$A \cap \partial B$ is a compact subset of $A$.  Hence, there is a 
point $q \in A \cap B$ such that the diameter of the subarc of $A$
with endpoints $p$ and $q$ is minimal.  Denote this diameter
by $l_A(p,q)$.
Then, we have
\begin{eqnarray*}
\epsilon + \delta/2 & = & |e_t - q|\\
& \leq & |e_t - p|\\
& \leq & |e_t - p| + l_A(p,q)\\
& < & |e_t - p| + l(p, p')\\
& < & \delta/2 + \epsilon  
\end{eqnarray*}
 Which is a contradiction, and the Lemma is proved.
\end{proof}
 
 It then follows that there is a rational polygonal arc $P$ from $e_t$ to $e_{t+1}$
 that lies in the interior of $J$, does not cross $Q_{t+1}$, and that does not 
 go further than $2^{-t} + 2^{-h(t)-1} \leq 2^{-t + 1}$ from $e_t$.  It now follows
 that a rational polygonal arc with properties \ref{prop1} - \ref{prop4}
 exists.
Such a curve can  be discovered through a simple search procedure which, in order
 to ensure (1), also computes $s$ such that $Q_{t+1} \subseteq \C - \overline{D_{2^{-s}}(P_s)}$.
 
 Let $Q =\bigcup_{t \in \N} Q_t$.  It follows that one endpoint of $Q$ is $z_0$ and 
 the other is $\zeta_0$.  By \ref{prop1} and \ref{prop2}, $Q \cap J = \{\zeta_0\}$.
 
 We have until now thought of each $Q_t$ as a set.   But, we wish 
 to think of $Q$ as a function.  Furthermore, we wish to compute 
 a name of a parameterization of $Q$ from the given data.  To this end, 
 we now backtrack a little and describe in more detail how we parameterize
 each $Q_t$.  Let $v_{t,1}, \ldots, v_{t, n(t)}$ denote the vertices of 
 $Q_t$ in the order in which they are traversed so that $z_0 = v_{t,1}$ and 
 $e_t = v_{t, n(t)}$.  Set 
 \[
 a_{t, j} = \frac{j-1}{n(t)}\mbox{\hspace{0.5in}} j = 1, \ldots, n(t).
 \] 
 Define $Q_t$ to be the function on $[0,1]$ that linearly maps each interval 
 $[a_{t,j}, a_{t, j+1}]$ onto the line segment from $v_{t,j}$ to $v_{t, j+1}$, and that maps 
 all of $[a_{t, n(t)}, 1]$ to the point $v_{t, n(t)}$.  
 
 It follows that $\lim_{t \rightarrow \infty} Q_t(x)$ exists for each $x \in [0,1]$, 
 and we define $Q(x)$ to be the value of this limit.  
 Note that 
 \[
 \parallel Q_t - Q_{t+1} \parallel_\infty \leq 2^{-t} + 2^{-h(t)} \leq 2^{-t + 1}.
 \]
 It now follows that whenever $t' \geq t$, 
 \begin{eqnarray*}
 \parallel Q_t - Q_{t'} \parallel_\infty & \leq & \sum_{j = t}^\infty 2^{-j + 1}\\
 & = & 2^{-t}.
 \end{eqnarray*}
 So, by Theorem 6.2.2.2 of \cite{Weihrauch.2000}, $Q$ is computable uniformly from the given data.
 
 Let $A$ be the circle with center $0$ and radius $1/2$.  
Let $A' = \phi^{-1}[A]$.  Since $A \subseteq \D$, we can compute names of $A'$ and its complement.  We can then compute $R > 0$ such that the circle
of radius $R$ centered at $\zeta_0$ does not intersect $A'$.

We now describe how we compute a name of $\phi(\zeta_0)$.  
Fix $r > 0$ such that $r < R, 1$.   Compute a point $w_1$ on $Q$ besides 
$\zeta_0$ such that every point on $Q$ between $w_1$ and $\zeta_0$ inclusive lies
in $D_r(\zeta_0)$.  In fact, we can take $w_1$ to be a vertex of $Q$.  
We can then compute $m$ such that 
\[
m^2 > \frac{(2\pi)^2}{\log(R) - \log(r)}.
\]
Note that the right side of this inequality approaches $0$ as 
$r$ approaches $0$ from the right. 
Let $T_2$ be the line $x = Re(\phi(w_1))$.  
Compute $\alpha$ such that $\phi(w_1) = |\phi(w_1)|e^{i \alpha}$.  We will proceed under
the assumption that $T_2$ hits $A$.  For, we can apply the following construction and 
argument to $\psi = e^{-i \alpha} \phi$.  Result will be an upper bound on 
$|\psi(w_1) - \psi(\zeta_0)| = |\phi(w_1) - \phi(\zeta_0)|$.  
Let $T_1$ be the line
$x = Re(\phi(w_1)) + m$, and let $T_3$ be the line $x = Re(\phi(w_1)) - m$.
We can thus assume $m$ is small enough 
so that the lines $T_1, T_3$ both 
intersect $A$.  Let $p_1$ be a point where one of these
lines intersects $\partial \D$ and $\overline{p_1\phi(w_1)}$
does not hit $A$.  Let $M = |p_1 - \phi(w_1)|$.  We claim that 
$|\phi(w_1) - \phi(\zeta_0)| < 2M$.  For, suppose otherwise.
Then, when $\phi$ is applied to $\overline{w_1\zeta_0}$, the resulting
arc hits $T_2$ and one of $T_1, T_3$.  Without loss of generality, suppose
it hits $T_1$.  Let $S_k = \phi^{-1}[T_k]$ for $k = 1,2$.  
Hence, $S_1, S_2$ hit $\overline{w_1\zeta_0}$.   Hence, every circle
centered at $\zeta_0$ and whose radius is between $r$ and $R$ inclusive
hits $S_1$ and $S_2$.  This puts us in position to use the ``Length-Area Trick" as follows.  Let $C_{r'}$ denote the circle with radius 
$r'$ centered at $\zeta_0$.  Fix $r \leq r' \leq R$.  The curves 
$S_1, S_2$ have positive minimum distance from each other.
It follows that there are points $z_{1,r'}$ and $z_{2,r'}$ on 
$C_{r'}$ that belong to $S_1, S_2$ respectively and such that 
no points on these curves appear on $C_{r'}$ between 
$z_{1,r'}$ and $z_{2, r'}$.  (We do not claim that we can compute
such points; this is not necessary to prove that our estimate is correct.)
Let $K_{r'}$ denote the arc on $C_{r'}$ from $z_{1,r'}$ to 
$z_{2,r'}$.
Hence, for some $\theta_{1,r'}, \theta_{2,r'}$
\begin{eqnarray*}
|\phi(z_{1,r'}) - \phi(z_{2,r'})| & = & \left| \int_{K_{r'}} \phi'(z) dz \right|\\
& \leq & \int_{\theta_{1,r'}}^{\theta_{2,r'}} |\phi'(z)| r' d\theta\\
\end{eqnarray*}
On the other hand, $\phi(z_{1,r'})$ is on $T_1$ and 
$\phi(z_{2,r'})$ is on $T_2$.  Hence, 
\[
m \leq \int_{\theta_{1,r'}}^{\theta_{2,r'}} |\phi'(z)| r' d\theta.
\]
At the same time, by the Schwarz Integral Inequality (see \emph{e.g.} Theorem 3.5, page 63 of \cite{Rudin.1987}),
\[
m^2 \leq \int_{\theta_{1,r'}}^{\theta_{2,r'}} |\phi'(z)|^2 d\theta \int_{\theta_{1,r'}}^{\theta_{2,r'}} (r')^2 d\theta.
\]
Whence
\begin{eqnarray*}
\frac{m^2}{r'} & \leq & r' \int_{\theta_{1,r'}}^{\theta_{2,r'}} |\phi'(z)|^2 d\theta \int_{\theta_{1,r'}}^{\theta_{2,r'}} d\theta\\
& \leq & 2\pi r' \int_{\theta_{1,r'}}^{\theta_{2,r'}} |\phi'(z)|^2 d\theta.
\end{eqnarray*}
If we now integrate both sides of this inequality with respect to $r'$
from $r$ to $R$, we obtain
\[
m^2[\log(R) - \log(r)] \leq 2\pi \int_r^R \int_{\theta_{1,r'}}^{\theta_{2,r'}} |\phi'(z)|^2r' d\theta dr'.
\]
It follows from the Lusin Area Integral (see \emph{e.g.} Lemma 13.1.2, page 386, of \cite{Greene.Krantz.2002}) that this double integral is no larger than $2\pi$.  Hence, 
\[
m^2 \leq \frac{(2\pi)^2}{\log(R) - \log(r)}.
\]
This is a contradiction.  So, $|\phi(\zeta_0) - \phi(w_1)| < 2M$.  

As $r$ approaches $0$ from the right, $\phi(w_1)$ approaches 
$\phi(\zeta_0)$ and so $m, M$ can be chosen so as to approach $0$.  It follows that we 
can now generate a name of $\phi(\zeta_0)$.
  
We have now computed $\phi$ on $\partial D$.  Since $\phi$ is 
injective, for each $z_0 \in \partial \D$, $\phi - z_0$ has a unique
$0$ on $\partial \D$.  It follows from Corollary 6.3.5 of \cite{Weihrauch.2000} that we can compute $\phi^{-1}$
on $\partial \D$.  It now follows as noted in the introduction to this 
proof that we can compute $\phi^{-1}$ on $\overline{\D}$.
For each $z_0 \in \overline{D}$, $\phi^{-1} - z_0$ has a unique
zero.  Hence, we can compute $\phi$ on $\overline{D}$.
\end{proof}

In \cite{Hertling.1999}, P. Hertling showed that to compute a conformal map $\phi$ of a proper, simply connected domain $D$ onto $\D$, it is necessary and 
sufficient to have a name of $D$ and a name of the boundary of $D$ as a closed
set.  This raises the question as to whether a name of the boundary of $D$ as a closed
set is, when $D$ is Jordan, sufficient to compute the Carath\'eodory extension of $D$.
In fact, this is not the case.  For, from a Carath\'eodory extension of $\phi$, it is possible to 
uniformly compute a parameterization of the boundary of $D$.  But, it follows from
Example 5.1 of \cite{Miller.2002} that there is a Jordan curve that is computable as a compact
set but has no computable parameterization.

\section*{Acknowledgements}

The author thanks his colleagues Dr. Valentin Andreev and Dr. Dale Daniel for helpful
conversations and his wife Susan for support.

\bibliographystyle{spmpsci}      

\end{document}